\def\csname ver@etex.sty\endcsname{3000/12/31}
\DeclarePairedDelimiter{\rbra}{\lparen}{\rparen} 
\DeclarePairedDelimiter{\sbra}{\lbrack}{\rbrack} 
\DeclarePairedDelimiter{\abra}{\langle}{\rangle} 
\declaretheoremstyle[
shaded={bgcolor=gray!15},
]{thmsty}
\declaretheorem[
name=Theorem,
refname={Theorem,Theorems},
style=thmsty,
]{theorem}
\declaretheorem[
name=Lemma,
refname={Lemma,Lemmas},
style=thmsty,
]{lemma}
\declaretheorem[
name=Assumption,
refname={Assumption,Assumptions},
style=thmsty,
]{assumption}
\crefname{algorithm}{Algorithm}{Algorithms}
\crefname{definition}{Definition}{Definitions}
\crefname{line}{Line}{Lines}
\crefname{lemma}{Lemma}{Lemmas}
\crefname{theorem}{Theorem}{Theorems}
\crefname{section}{Section}{Sections}
\crefname{appendix}{Appendix}{Appendices}
\crefname{table}{Table}{Tables}
\crefname{figure}{Figure}{Figures}
\crefname{equation}{}{}
\crefname{item}{}{}
\Crefname{equation}{Eq.}{Eqs.}
\crefname{assumption}{Assumption}{Assumptions}
\crefname{question}{Question}{Questions}
\setlist[itemize]{
topsep=0.4\baselineskip,
itemsep=0\baselineskip,
leftmargin=1.5em,
}
\newlist{enuminasm}{enumerate}{1} 
\setlist[enuminasm]{
  font=\upshape,
  label=(\alph*),
  ref=\theassumption(\alph*),
  topsep=0.4\baselineskip,
  itemsep=0\baselineskip,
  leftmargin=2em,
}
\newlist{enuminthm}{enumerate}{1}
\setlist[enuminthm]{
  font=\upshape,
  label=(\alph*),
  ref=\thetheorem(\alph*),
  topsep=0.4\baselineskip,
  itemsep=0\baselineskip,
  leftmargin=2em,
}
\newcommand{\R}{{\mathbb{R}}}
\newcommand{\euclid}{{\R^d}}
\newcommand{\hessian}{{\nabla^2}}
\newcommand{\inprod}[2]{{\abra*{{#1}, {#2}}}}
\newcommand{\normsq}[1]{{\norm{#1}^2}}
\newcommand{\gnorm}[1]{\norm{\nabla f\rbra*{{#1}}}}
\newcommand{\explain}[1]{{\rbra*{\text{by } {#1}}}}
\newcommand{\since}[1]{\rbra*{\text{since } {#1}}}
\newcommand{\cint}[2]{{\sbra*{{#1},{#2}}}}
\newcommand{\sevenfour}{{O\rbra{\varepsilon^{-7/4}}}}
\newcommand{\sevenfourlog}{{\tilde{O}\rbra{\varepsilon^{-7/4}}}}
\newcommand{\proofsubsection}[1]{\subsection{Proof of \cref{#1}} \label{subsection:proof-of-#1}}
\renewcommand*{\epsilon}{\varepsilon}
\newcommand{\email}[1]{\href{mailto:#1}{\nolinkurl{#1}}}
\begin{document}
\bibliographystyle{abbrvnat}

\title{Heavy-ball Differential Equation Achieves $O(\varepsilon^{-7/4})$ Convergence for Nonconvex Functions}







\author[1]{Kaito Okamura\footnote{Corresponding author. E-mail: \email{sgwgkaito@g.ecc.u-tokyo.ac.jp}}}
\author[1]{Naoki Marumo}
\author[1,2]{Akiko Takeda}

\affil[1]{Graduate School of Information Science and Technology, University of Tokyo, Tokyo, Japan}
\affil[2]{Center for Advanced Intelligence Project, RIKEN, Tokyo, Japan}

\maketitle

\begin{abstract}
  First-order optimization methods for nonconvex functions with Lipschitz continuous gradient and Hessian have been extensively studied.
	State-of-the-art methods for finding an $\varepsilon$-stationary point within $O(\varepsilon^{-{7/4}})$ or $\tilde{O}(\varepsilon^{-{7/4}})$ gradient evaluations are based on Nesterov's accelerated gradient descent (AGD) or Polyak's heavy-ball (HB) method.
	However, these algorithms employ additional mechanisms, such as restart schemes and negative curvature exploitation, which complicate their behavior and make it challenging to apply them to more advanced settings (e.g., stochastic optimization).
	As a first step in investigating whether a simple algorithm with $O(\varepsilon^{-{7/4}})$ complexity can be constructed without such additional mechanisms, we study the HB differential equation, a continuous-time analogue of the AGD and HB methods.
	We prove that its dynamics attain an $\varepsilon$-stationary point within $O(\varepsilon^{-{7/4}})$ time.
\end{abstract}

\smallskip

\noindent
\textbf{Keywords:} 
Nonconvex optimization,
First-order method,
Polyak's heavy-ball method,
Complexity analysis,
Differential equation

\medskip

\section{Introduction}\label{sec:intro}
We consider general nonconvex optimization problems:
\begin{align}
	\min_{x\in \euclid} f(x),\label{eq:nonconvex_optimization}
\end{align}
where $f\colon \euclid \to \R$ is twice differentiable, possibly nonconvex, and lower bounded, i.e., $\inf_{x\in\euclid} f(x) > -\infty$.
\emph{First-order methods}, which use only the information of the function value and gradient of $f$, are widely employed to solve this problem.
First-order methods aimed at finding an $\epsilon$-stationary point (i.e., a point $x \in \euclid$ satisfying $\norm{\nabla f(x)} \leq \epsilon$) with fewer function/ gradient evaluations are actively being researched in the fields of optimization and machine learning~\citep{ghadimi2013stochastic,bottou2018optimization,reddi2016proximal,lei2017nonconvex,li2018simple,zhou2020stochastic,fang2018spider,pham2020proxsarah,wang2019spiderboost,li2021page,NEURIPS2018_d4b2aeb2,pmlr-v75-jin18a,pmlr-v70-carmon17a,xu2017neon+,doi:10.1137/22M1540934,Marumo2024,JMLR:v24:22-0522,li2022restarted}.

A classical result is that the gradient descent (GD) method finds an $\varepsilon$-stationary point within $O(\varepsilon^{-2})$ gradient evaluations under the Lipschitz assumption of $\nabla f$~\citep{nesterov2004introductory}.
It has also been proven that this complexity bound cannot be improved under the same assumption~\citep{Carmon2020}.
From a practical perspective, the advantage of GD lies in the simplicity of the algorithm, which is expressed as a single loop without any conditional branches.
Thus, GD can be extended to address more advanced optimization settings, such as finite-sum and online optimization,
while preserving certain theoretical guarantees.
Many algorithms with complexity bounds, such as SGD~\citep{robbins1951stochastic,ghadimi2013stochastic,bottou2018optimization}, SVRG~\citep{reddi2016proximal}, and others~\citep{lei2017nonconvex,li2018simple,zhou2020stochastic,fang2018spider,pham2020proxsarah,wang2019spiderboost,li2021page}, are designed by building upon the foundation of GD.

Another line of research aims to achieve better complexity bounds than GD by imposing stronger assumptions on the function $f$.
Several first-order methods~\citep{NEURIPS2018_d4b2aeb2,pmlr-v75-jin18a,pmlr-v70-carmon17a,xu2017neon+,doi:10.1137/22M1540934,Marumo2024,JMLR:v24:22-0522,li2022restarted} achieve a complexity bound of $\sevenfour$ or $\sevenfourlog$%
\footnote{%
	The symbol $\tilde{O}\rbra{\cdot}$ hides the polynomial factor of $\log \rbra*{\varepsilon^{-1}}$.
	For example, $\varepsilon^{-2}\rbra{\log\rbra{\varepsilon^{-1}}}^{3} = \tilde O\rbra{\varepsilon^{-2}}$.
}
under the Lipschitz continuity of the Hessian, in addition to the gradient.
These algorithms are based not on GD but on momentum methods, specifically Nesterov's accelerated gradient descent (AGD)~\citep{nesterov1983method} or Polyak's heavy-ball (HB) method~\citep{POLYAK19641}.
In addition, these algorithms incorporate additional mechanisms for improving complexity that are not present in GD-like methods, such as restart schemes and negative curvature exploitation.
These mechanisms introduce conditional branching into the algorithm, complicating its behavior and making it difficult to extend the algorithms and their theoretical guarantees to advanced problem settings.

When considering the aforementioned two lines of research together, a natural question arises: Are additional mechanisms, such as restart schemes, essential to achieve better computational complexity than GD?
More precisely, is it possible to construct a first-order method with the following two properties? 
\begin{itemize}
\item 
  It finds an $\varepsilon$-stationary point within $\sevenfour$ function/gradient evaluations under the Lipschitz assumption on the gradient and Hessian.
\item 
  It is expressed as a single loop without any conditional branches.
\end{itemize}
Considering that some first-order methods~\citep{kim2016optimized,taylor2023optimal,van2018fastest,nesterov1983method} for convex optimization, including AGD, achieve optimal complexity without any conditional branches, it is natural to pursue such a method for nonconvex optimization.
If such a method exists, it would be not only theoretically elegant but also practically desirable, as it would open the door to extending state-of-the-art first-order methods to more advanced settings.

To explore the feasibility of such a first-order method, we analyze the HB ordinary differential equation (HB-ODE)
\begin{align}
	\ddot x(t) & = -\alpha(t) \dot x(t) - \nabla f(x(t)),\label{equation:ode-general}
\end{align}
which serves as a common continuous-time analogy for both AGD and HB methods.
This ODE has been studied mainly for convex cases~\citep{kim2023unifying,krichene2015accelerated,maddison2018hamiltonian,PMID:27834219,su2016differential,wilson2021lyapunov,polyak2017lyapunov,Luo2022,NEURIPS2023_c7074114}.
The advantage of studying ODEs for optimization algorithms is that the behavior of continuous dynamics is often easier to analyze than that of the algorithms themselves.
Leveraging the clarity of ODE analysis via Lyapunov functions, existing algorithms such as AGD have been reinterpreted, and algorithms with new features have been proposed within the context of convex optimization~\citep{kim2023unifying,krichene2015accelerated,maddison2018hamiltonian,PMID:27834219,su2016differential,wilson2021lyapunov,polyak2017lyapunov,Luo2022,NEURIPS2023_c7074114}.
In this study, as an initial step toward confirming the feasibility of the above-mentioned research question, we examine the convergence rate of the average solution to the HB-ODE \eqref{equation:ode-general} to an $\epsilon$-stationary point.

\paragraph{Our contribution.}\label{paragraph:contribution}
We analyze the HB-ODE \eqref{equation:ode-general} with a specific choice of $\alpha(t)$ and prove that, under the Lipschitz assumptions on the gradient and Hessian, a point derived from the solution trajectory of the HB-ODE reaches an $\epsilon$-stationary point within $\sevenfour$ time.
Since the ODE does not contain any complicated mechanisms, such as restart schemes or negative curvature exploitation, this suggests the possibility of constructing a simpler algorithm with the same complexity.
To the best of our knowledge, this is the first analysis of the HB-ODE
in nonconvex optimization that establishes a convergence rate matching the best-known complexity bound of first-order methods under the same assumptions.

The HB-ODE \eqref{equation:ode-general} involves a friction parameter $\alpha(t)$, 
and its appropriate setting for achieving fast convergence has been studied for convex optimization~\citep{su2016differential,wilson2021lyapunov,suh2022continuous,NEURIPS2023_c7074114}.
For instance, \citet{su2016differential} analyzed the case $\alpha(t) = 3/t$ and reassessed the efficiency of AGD.
We propose setting $\alpha(t) = \Theta(T^{-1/7})$, where $\alpha(t)$ depends on the termination time $T$ of the ODE.
This somewhat unconventional setting of the friction parameter $\alpha(t)$ is derived from our ODE analysis and, to the best of our knowledge, is the first of its kind.

A typical approach in the convergence analysis of ODEs is to construct a suitable Lyapunov function~\citep{kim2023unifying,krichene2015accelerated,maddison2018hamiltonian,PMID:27834219,su2016differential,wilson2021lyapunov,polyak2017lyapunov,Luo2022,NEURIPS2023_c7074114}.
This paper employs a different approach; we analyze an average trajectory and evaluate gradients along it.

\paragraph{Notation.}\label{paragraph:notation}
For $x, y \in \euclid$, let $\inprod{x}{y}$ denote the inner product of $x$ and $y$ and $\norm{x}$ denote the Euclidean norm.
For a matrix $A \in \R^{m \times n}$, let $\norm{A}$ denote the operator norm of $A$, which is equal to the largest singular value of $A$.
We write the derivative of $x\colon \R\to\euclid$ as $\dot{x}(t) \coloneqq \dv{x(t)}{t}$.




\section{Related Work}
\label{section:related-work}

\paragraph{Analysis of HB-ODE for convex functions.}
The HB-ODE~\cref{equation:ode-general} was first introduced by \citet{POLYAK19641}, with early studies on its convergence found in \citep{attouch2000heavy,attouch2000heavy2,alvarez2001inertial}.
\citet{su2016differential} proved that the objective function value converges at a rate of $O(t^{-2})$ for the ODE with $\alpha(t) = 3/t$.
The same rate was shown in a more general framework using Bregman divergence by \citet{krichene2015accelerated}.
This rate was later improved to $o(1/t^2)$ by \citet{attouch2016rate} and \citet{may2017asymptotic}.
\citet{kim2023unifying} established a unified framework encompassing both convex and strongly convex settings, with the rate $O ( \min \{ 1/t^2, e^{-\sqrt{\mu} t} \} )$, where $\mu \geq 0$ is the strong convexity parameter.
Extensions to monotone inclusion problems and differential inclusions were also developed, e.g., in~\citep{bot2016second}.

\paragraph{Analysis of HB-ODE for nonconvex functions.}
Compared to convex settings, the analysis of the HB-ODE for nonconvex functions remains relatively limited.
\citet{goudou2009gradient} proved the convergence of the solution under the assumption that $f$ is quasi-convex.
\citet{aujol2022convergence} showed an $O(e^{-\sqrt{2\mu} t})$ convergence rate of the objective value under $\mu$-strong quasi-convexity.
The convergence rate under a general growth condition has also been investigated~\citep{aujol2023convergence}.
\citet{Le2024} studied a differential inclusion generalizing the ODE~\cref{equation:ode-general} to establish the convergence of a stochastic HB method for nonsmooth and nonconvex functions.
In this paper, we analyze the HB-ODE under a different set of assumptions from the above studies, namely the Lipschitz continuity of the gradient and Hessian.
To the best of our knowledge, this is the first analysis of the nonconvex HB-ODE under such relatively mild conditions.

\paragraph{First-order methods with complexity bounds of $\sevenfour$ or $\sevenfourlog$.}
\cref{table:first-order-methods} summarizes existing first-order methods achieving complexity bounds of $\sevenfour$ or $\sevenfourlog$ under the Lipschitz continuity of the gradient and Hessian.
\citet{pmlr-v70-carmon17a} first developed a first-order method with $\sevenfourlog$ complexity. The algorithm is relatively complicated, as it iteratively solves regularized subproblems with AGD and exploits negative curvature directions of $f$.
Subsequently, several other algorithms achieving the same complexity have been proposed~\citep{NEURIPS2018_d4b2aeb2,xu2017neon+,pmlr-v75-jin18a}.
\citet{li2022restarted} improved the complexity to $\sevenfour$, and several other methods have also achieved the same complexity~\citep{doi:10.1137/22M1540934,Marumo2024,JMLR:v24:22-0522}.
An important motivation behind the above research is not only to improve complexity but also to simplify the algorithms.
In fact, more recent algorithms are simpler and more practical.
The methods in~\citep{li2022restarted,doi:10.1137/22M1540934,Marumo2024,JMLR:v24:22-0522} achieve superior complexity bounds using restart schemes that are relatively simple compared to solving subproblems iteratively or exploiting negative curvature.

\begin{table}[t]
	\caption{
		First-order methods under the Lipschitz assumption on the gradient and Hessian.
	}
	\label{table:first-order-methods}
	\centering
	\smallskip
	\begin{tabular}{@{}cccl@{}}\toprule
		                                             & Complexity         & Based on & Additional mechanisms                      \\
		\midrule
		\citep{NEURIPS2018_d4b2aeb2,xu2017neon+}     & $\sevenfourlog$ & AGD      & Subproblem, Negative curvature, Randomness \\
		\citep{pmlr-v70-carmon17a}                   & $\sevenfourlog$ & AGD      & Subproblem, Negative curvature             \\
		\citep{pmlr-v75-jin18a}                      & $\sevenfourlog$ & AGD      & Negative curvature, Randomness             \\
		\citep{doi:10.1137/22M1540934,li2022restarted}  & $\sevenfour$    & AGD      & Restart                                    \\
		\citep{Marumo2024,JMLR:v24:22-0522} & $\sevenfour$    & HB       & Restart                                    \\
		\bottomrule
	\end{tabular}
\end{table}

\paragraph{Heavy-ball method.}
Polyak's HB method~\citep{POLYAK19641} and its variants~\citep{sutskever2013importance,kingma2015adam,reddi2018convergence,cutkosky2020momentum} are widely applied to nonconvex optimization problems in machine learning.
Despite its great practical success, research on the theoretical performance of the HB method without restart schemes remains limited.
\citet{oneill2019behavior} showed that the original HB method is unlikely to converge to strict saddle points.
\citet{ochs2014ipiano} introduced a proximal variant of HB with a complexity bound of $O(\epsilon^{-2})$ under the Lipschitz assumption for gradients.

\section{Analysis of Heavy-ball ODE}\label{section:continuous}
We focus on the following ordinary differential equation (ODE):
\begin{equation}
	\ddot x(t) = -\alpha \dot x(t) - \nabla f(x(t)),\quad
	x(0) = x_0,\quad
	\dot x(0) = 0,\label{equation:ode}
\end{equation}
where $\alpha > 0$ is a fixed parameter.
This ODE can be interpreted as the equation of motion of a particle subject to a conservative force $- \nabla f(x(t))$ and a frictional force $-\alpha \dot x(t)$.
Discretizing this ODE leads to first-order methods such as the AGD and HB methods; analyzing the ODE will provide insights into their design.

We analyze the ODE~\cref{equation:ode} under the following Lipschitz assumptions:
\begin{assumption}\label{assumption:L1L2}
	For some $L_1, L_2 > 0$,
	\begin{enuminasm}
		\item\label{assumption:L1}
		$\norm{\nabla f(y) - \nabla f(x)} \le L_1 \norm{y-x}$ for all $x, y\in \euclid$,
		\item\label{assumption:L2}
		$\norm{\hessian f(y) - \hessian f(x)} \le L_2 \norm{y-x}$ for all $x, y\in \euclid$.
	\end{enuminasm}
\end{assumption}

\subsection{Uniqueness of the Solution}
Let us first discuss the uniqueness of the solution to the ODE~\cref{equation:ode}.
The ODE is equivalent to the following first-order ODE:
\begin{align}
	\dv{t} \begin{pmatrix}
					 x(t) \\
					 \dot x(t)
				 \end{pmatrix}
	= \begin{pmatrix}
			\dot x(t) \\
			-\alpha\dot x(t)-\nabla f(x(t))
		\end{pmatrix}
	\label{equation:ode-first-order}
\end{align}
Under \cref{assumption:L1}, the mapping
$\begin{pmatrix}
		v_1 \\
		v_2
	\end{pmatrix} \mapsto \begin{pmatrix}
		v_2 \\
		-\alpha v_2 - \nabla f(v_1)
	\end{pmatrix}$
is also Lipschitz continuous, and thus there exists a unique solution to the ODE \cref{equation:ode-first-order}.

\subsection{Convergence Rate}
To achieve fast convergence, we consider the average trajectory of the ODE solution:
\begin{equation}
	{\bar{x}}(t) \coloneqq \int_{0}^{t} w_t(s) x(s) \dd s \label{equation:bar-x-t-def}
\end{equation}
for $t > 0$ and $\bar x(0) \coloneqq x(0)$, where $w_t \colon [0, t] \to [0, \infty)$ is a weight function such that $\int_{0}^{t} w_t(s) \dd s = 1$.
Such average solutions have also been studied in the analysis of existing first-order methods~\citep{doi:10.1137/22M1540934,li2022restarted,Marumo2024,JMLR:v24:22-0522} based on AGD or HB for nonconvex optimization.
In this paper, we define the weight in a somewhat subtle manner as
\begin{equation}
	w_t(s)
	\coloneqq
	\frac{\alpha e^{\alpha s}}{e^{\alpha t} - 1}.
	\label{eq:def_w}
\end{equation}
Note that $\alpha$ in this definition corresponds to the friction coefficient in the ODE~\cref{equation:ode}.
This choice of the weight function plays an essential role in our convergence analysis.

The main result of this section is the following theorem.
It should be noted that this theorem evaluates the norm of the gradients at $\bar x(t)$, rather than at $x(t)$ itself.
\begin{theorem}\label{theorem:continuous-convergence-rate}
	Suppose that \cref{assumption:L1L2} holds, and let ${\Delta_f} \coloneqq f(x_0) - \inf_{x\in\euclid} f(x)$.
	Fix $T > 0$ arbitrarily, and set $\alpha$ in the ODE~\cref{equation:ode} as
	\begin{equation}
		\alpha = \rbra{3 L_2}^{\frac{2}{7}} \rbra*{\frac{\Delta_f}{T}}^{\frac{1}{7}}.
		\label{eq:def_alpha}
	\end{equation}
	Then, the following holds:
	\begin{equation}
		\min_{0 \le t \le T} \norm{\nabla f(\bar x{(t)})} \le \frac{7}{6} (3 L_2)^{\frac{1}{7}} \rbra*{\frac{\Delta_f}{T}}^{\frac{4}{7}}
		+ O\rbra*{T^{-\frac{10}{7}}}.
		\label{inequation:theorem-x-bar-norm-upper-cts}
	\end{equation}
\end{theorem}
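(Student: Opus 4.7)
The plan is to derive a pointwise upper bound on $\|\nabla f(\bar x(t))\|$ consisting of a term proportional to $\|\dot x(t)\|$ and a Hessian-Lipschitz error quadratic in the variance of $x$ around $\bar x(t)$, then to average over $t \in [0, T]$ and use $\min \le \mathrm{avg}$, with $\alpha$ chosen to balance the two contributions. First I would record the energy identity $\dv{t}\sbra{f(x(t)) + \tfrac{1}{2}\|\dot x(t)\|^2} = -\alpha \|\dot x(t)\|^2$, which integrates to $\alpha \int_0^T \|\dot x(t)\|^2 \dd t \le \Delta_f$ and also yields the pointwise bound $\|\dot x(t)\| \le \sqrt{2\Delta_f}$. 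Multiplying the ODE~\cref{equation:ode} by $e^{\alpha s}$ and integrating with $\dot x(0) = 0$ gives $e^{\alpha t}\dot x(t) = -\int_0^t e^{\alpha s}\nabla f(x(s)) \dd s$, which with the definition of $w_t$ rearranges to the compact identity
\begin{equation*}
\int_0^t w_t(s)\nabla f(x(s)) \dd s = -\frac{\alpha e^{\alpha t}}{e^{\alpha t}-1}\dot x(t).
\end{equation*}
This is precisely what makes the peculiar exponential weight essential.

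Next, I would Taylor-expand $\nabla f(x(s))$ around $\bar x(t)$ and integrate against $w_t$: by \cref{assumption:L2} the remainder is bounded by $\tfrac{L_2}{2}\|x(s) - \bar x(t)\|^2$, while the linear term vanishes thanks to the defining property $\int_0^t w_t(s)(x(s) - \bar x(t)) \dd s = 0$. Combining with the identity above yields
\begin{equation*}
\|\nabla f(\bar x(t))\| \le \frac{\alpha e^{\alpha t}}{e^{\alpha t}-1}\|\dot x(t)\| + \frac{L_2}{2}V(t), \qquad V(t) \coloneqq \int_0^t w_t(s)\|x(s)-\bar x(t)\|^2 \dd s.
\end{equation*}
Averaging over $[0, T]$ and using $\min \le \frac{1}{T}\int_0^T$ reduces the theorem to bounding the time-average of each term. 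For the first, Cauchy--Schwarz together with energy dissipation gives $\frac{\alpha}{T}\int_0^T \|\dot x(t)\| \dd t \le \sqrt{\alpha \Delta_f/T}$, and the excess factor $\frac{\alpha}{e^{\alpha t}-1}$ is handled by combining $\|\dot x(t)\| \le \sqrt{2\Delta_f}$ globally with $\|\dot x(t)\| = O(t)$ near $t = 0$ (so the apparent singularity cancels), contributing only $O(T^{-10/7})$. For the second, I would bound $V(t) \le \int_0^t w_t(s)\|x(s)-x(t)\|^2 \dd s$ (the weighted variance is dominated by the mean-squared deviation from any point), expand $\|x(s)-x(t)\|^2 \le (t-s)\int_s^t \|\dot x(r)\|^2 \dd r$ via Cauchy--Schwarz, and swap the order of the resulting triple integral using the explicit exponential form of $w_t$ to obtain $\int_0^T V(t) \dd t \le \Delta_f/\alpha^3$ at leading order.

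Finally, the averaged bound takes the form $\sqrt{\alpha\Delta_f/T} + L_2\Delta_f/(2T\alpha^3)$ plus lower-order corrections; this expression is minimized at $\alpha = (3L_2)^{2/7}(\Delta_f/T)^{1/7}$, where the two contributions are in ratio $6{:}1$, producing exactly $\frac{7}{6}(3L_2)^{1/7}(\Delta_f/T)^{4/7}$. The main obstacle I anticipate is the tight evaluation of $\int_0^T V(t) \dd t$: the naive bound $V(t) \le \Delta_f/\alpha^2$ would lose a factor of $T$ upon integration, so one must carefully exploit the exponential concentration of $w_t$ near $s = t$ through Fubini manipulations to reach the sharp $\alpha^{-3}$ scaling with a constant small enough to realize the $\frac{7}{6}$ coefficient rather than something larger. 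A secondary technical point is verifying that all the $t \approx 0$ corrections genuinely collapse to $O(T^{-10/7})$ rather than a naive $O(T^{-1})$, which requires combining the initial-condition-induced smallness $\|\dot x(t)\| = O(t)$ with the chosen scale of $\alpha$.
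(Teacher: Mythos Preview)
Your outline is structurally on target and uses all the right ingredients---the energy identity, the key integrating-factor computation yielding $\int_0^t w_t(s)\nabla f(x(s))\,ds=-\tfrac{\alpha}{1-e^{-\alpha t}}\dot x(t)$, the Taylor expansion at $\bar x(t)$ with vanishing linear term, and the final balancing of $\alpha$. However, two of the specific steps you propose do not deliver the constants stated in the theorem.

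First, bounding $V(t)\le\int_0^t w_t(s)\|x(s)-x(t)\|^2\,ds$ (variance $\le$ mean-squared deviation from any point) and then applying Cauchy--Schwarz to $\|x(s)-x(t)\|^2$ is valid but loses exactly a factor of~$2$: carrying your Fubini computation through gives $\int_0^T V(t)\,dt\le 2\Delta_f/\alpha^3$ at leading order, which with the prescribed $\alpha$ produces the leading coefficient $\tfrac{4}{3}$ rather than $\tfrac{7}{6}$. The paper recovers the sharp constant by first using the \emph{identity} $V(t)=\iint_{0\le\sigma\le\tau\le t}w_t(\sigma)w_t(\tau)\|x(\tau)-x(\sigma)\|^2\,d\sigma\,d\tau$ and only then applying Cauchy--Schwarz to $\|x(\tau)-x(\sigma)\|^2$; replacing the pair $(\sigma,\tau)$ by the single reference point $x(t)$ is precisely what costs the factor~$2$.

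Second, your plan to use the unweighted average $\min\le\tfrac{1}{T}\int_0^T$ and absorb the prefactor $\tfrac{\alpha}{e^{\alpha t}-1}$ via $\|\dot x(t)\|=O(t)$ does not yield the stated $O(T^{-10/7})$ remainder. The cancellation makes the integrand bounded near $t=0$, but a bounded integrand over an interval of length $\Theta(1/\alpha)$ still contributes $\Theta(1/\alpha)$ to $\int_0^T$, hence $O((\alpha T)^{-1})=O(T^{-6/7})$ after averaging. The paper sidesteps this entirely by multiplying the pointwise gradient bound by $(1-e^{-\alpha t})^2$ \emph{before} integrating in~$t$; this clears both singular prefactors at once, and the only lower-order correction then comes from $\int_0^T(1-e^{-\alpha t})^2\,dt=T-\tfrac{3}{2\alpha}+o(1)$, which is exactly the source of the $O(T^{-10/7})$ term.
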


\cref{theorem:continuous-convergence-rate} states that $\min_{0 \le t \le T} \gnorm{\bar x(t)} = O\rbra{T^{-{4}/{7}}}$, which implies that $\norm{\nabla f\rbra*{\bar x(t)}} \leq \varepsilon$ holds for some $t = \sevenfour$.
This convergence rate is consistent with the state-of-the-art complexity bound of $\sevenfour$ for the first-order methods discussed in \cref{section:related-work}.
\cref{theorem:continuous-convergence-rate} suggests the possibility of developing a first-order method with a complexity bound of $\sevenfour$ without any additional mechanisms by discretizing the ODE \cref{equation:ode}.

Let us provide some additional remarks on \cref{theorem:continuous-convergence-rate}.
The Lipschitz constant $L_1$ of $\nabla f$ does not appear on the right-hand side of \cref{inequation:theorem-x-bar-norm-upper-cts}; the Lipschitz continuity is used only to guarantee the uniqueness of the solution.
Note that the trajectories $x(t)$ and $\bar{x}(t)$ depend on the fixed parameter $T$ because the parameter $\alpha$ is defined as a function of $T$ in \cref{theorem:continuous-convergence-rate}.
Differential equations whose solution trajectories depend on the termination time are sometimes used in the analysis of optimization methods~\citep{suh2022continuous,kim2023unifying}.

\proofsubsection{theorem:continuous-convergence-rate}
\label{sec:proof-sketch-continuous}
This section shows the proof of \cref{theorem:continuous-convergence-rate} and some key ideas for it.
Below is the first key lemma.
\begin{lemma} \label{lemma:gradient-mean-continuous}
	Let $t > 0$ and $z\colon {\cint{0}{t}} \rightarrow \euclid$.
	Suppose \cref{assumption:L2} holds and that a function $w\colon {\cint{0}{t}}\rightarrow [0, \infty)$ satisfies $\int_{0}^{t} w(s) \dd s = 1$. Let $\bar z\coloneqq \int_{0}^{t} w(s) z(s) \dd s$.
	Then, the following holds:
	\begin{align}
		\norm{\nabla f(\bar z) - \int_{0}^{t} w(s) \nabla f(z(s)) \dd s}
		\le \frac{L_2}{2}
		\int_{0}^t {
		\normsq{\dot z(s)}
		}
		\rbra*{{\int_{0}^s \dd \sigma \int_s^t \dd \tau \ w(\sigma) w(\tau)
					(\tau - \sigma) }}
		\dd s.
	\end{align}
\end{lemma}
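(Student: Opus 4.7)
The plan is to use a second-order Taylor expansion of $\nabla f$ around the weighted average $\bar z$, then bound the remaining weighted variance of $z(\cdot)$ via a pairwise-distance identity followed by Cauchy--Schwarz applied to the path $r\mapsto z(r)$.

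First, under \cref{assumption:L2}, the standard consequence of Lipschitz Hessian
\begin{equation*}
\norm{\nabla f(z(s)) - \nabla f(\bar z) - \hessian f(\bar z)(z(s) - \bar z)} \le \frac{L_2}{2}\normsq{z(s) - \bar z}
\end{equation*}
holds for every $s \in [0,t]$. Integrating against $w(s)$ and using $\int_0^t w(s)(z(s)-\bar z)\dd s = 0$ by definition of $\bar z$ annihilates the Hessian term, yielding
\begin{equation*}
\norm{\nabla f(\bar z) - \int_0^t w(s)\nabla f(z(s))\dd s} \le \frac{L_2}{2}\int_0^t w(s)\normsq{z(s) - \bar z}\dd s.
\end{equation*}

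Next, I rewrite the weighted variance on the right using the identity
\begin{equation*}
\int_0^t w(s)\normsq{z(s)-\bar z}\dd s = \frac{1}{2}\int_0^t\!\int_0^t w(\sigma)w(\tau)\normsq{z(\sigma)-z(\tau)}\dd\sigma\dd\tau = \int_0^t\!\int_\sigma^t w(\sigma)w(\tau)\normsq{z(\sigma)-z(\tau)}\dd\tau\dd\sigma,
\end{equation*}
which follows by expanding the squared norms and invoking $\int_0^t w = 1$. For $\sigma<\tau$, Cauchy--Schwarz on the path yields $\normsq{z(\sigma)-z(\tau)} = \normsq{\int_\sigma^\tau\dot z(r)\dd r}\le(\tau-\sigma)\int_\sigma^\tau\normsq{\dot z(r)}\dd r$. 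Substituting and swapping the order of integration---for fixed $r$, the constraints $\sigma<\tau$ together with $\sigma\le r\le\tau$ become $\sigma\in[0,r]$ and $\tau\in[r,t]$---produces exactly the double integral on the right-hand side of the claimed bound, after renaming $r$ to $s$.

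The main obstacle I anticipate is the sharpness of the variance identity. A naive Jensen bound $\normsq{z(s)-\bar z}\le\int_0^t w(\tau)\normsq{z(s)-z(\tau)}\dd\tau$ would cost a factor of $2$ and yield the weaker constant $L_2$ in place of the stated $L_2/2$; recognizing that the weighted variance is \emph{exactly} $\tfrac{1}{2}$ times the weighted mean squared pairwise distance (an equality, not an inequality) is what preserves the sharp constant. Everything else---the Taylor estimate, the Cauchy--Schwarz step, and the Fubini swap---is routine.
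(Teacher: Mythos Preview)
Your proposal is correct and follows essentially the same three-step route as the paper: the Taylor remainder bound under \cref{assumption:L2} with the Hessian term averaging to zero, the exact weighted-variance $=$ half weighted pairwise distance identity, and finally Cauchy--Schwarz on the path plus Fubini to reorganize the integrals. Your observation that using the variance identity rather than a Jensen-type bound is what preserves the sharp constant $L_2/2$ matches the paper's implicit reasoning.
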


\begin{proof}
  We now proceed to show each of the following equalities and inequalities:
	\begin{align}
		\norm{
			\nabla f(\bar z) - \int_{0}^{t} w(s)\nabla f(z(s)) \dd s
		}
		&\le
		\frac{L_2}{2} \int_{0}^{t} w(s)\normsq{z (s) - \bar z } \dd s
		\label{inequation:lemma-continuous-first-appendix}\\
		&=
		\frac{L_2}{2} \iint_{0 \le \sigma \le \tau \le t} w(\sigma) w(\tau)\normsq{z(\tau) - z(\sigma)} \dd \sigma \dd \tau
		\label{inequation:lemma-continuous-second-appendix}\\
		&\le
		\frac{L_2}{2}
		\int_{0}^{t} \normsq{\dot z(s)}
		\rbra*{{\int_{0}^{s} \dd \sigma \int_{s}^{t} \dd \tau \ w(\sigma) w(\tau)
					(\tau - \sigma) }}
		\dd s.
		\label{inequation:lemma-continuous-third-appendix}
	\end{align}
	For $s\in \cint{0}{t}$, we have
	\begin{align}
		\nabla f(z(s)) - \nabla f(\bar z)
		 & = \int_0^1 \hessian f((1 - \sigma)\bar z + \sigma z(s)) (z (s) - \bar z ) \dd \sigma
		\\ & = \hessian f(\bar z)(z(s) - \bar z)
		\\ & \hspace{18pt} + \int_0^1 \rbra*{
		\hessian f((1 - \sigma)\bar z + \sigma z(s))  - \hessian f(\bar z)
		}
		(z (s) - \bar z ) \dd \sigma.
		\label{equation:diff-nabla}
	\end{align}
	By multiplying \cref{equation:diff-nabla} by $w(s)$ and integrating it on $s\in \cint{0}{t}$, we have
	\begin{align}
		 & \int_{0}^{t} w(s) \rbra*{\nabla f(z(s)) - \nabla f(\bar z)} \dd s
		\\ & \hspace{30pt} = \int_{0}^{t} w(s) \rbra*{\hessian f(\bar z)(z(s) - \bar z)} \dd s
		\\ & \hspace{60pt} + \int_{0}^{t} w(s) {\int_0^1 \rbra*{
			\hessian f((1 - \sigma)\bar z + \sigma z(s))  - \hessian f(\bar z)
		}
		(z (s) - \bar z ) \dd \sigma} \dd s.
		\label{equation:diff-nabla-integral}
	\end{align}
	The left-hand side of \cref{equation:diff-nabla-integral} is
	\begin{align}
		\int_{0}^{t} w(s) \rbra*{\nabla f(z(s)) - \nabla f(\bar z)} \dd s
		 & =
		\int_0^{t} w(s) \nabla f(z(s)) \dd s
		- \nabla f(\bar z)
		\label{equation:diff-nabla-lhs}
	\end{align}
	since $\int_0^{t} w(s) \dd s = 1$.
	The first term on the right-hand side of \cref{equation:diff-nabla-integral} is
	\begin{align}
		\int_{0}^{t} w(s) \rbra*{\hessian f(\bar z)(z(s) - \bar z)}\dd s
		 & = \hessian f(\bar z) \int_{0}^{t} w(s) (z(s) - \bar z)\dd s
		\\ & = \hessian f(\bar z) \rbra*{\int_{0}^{t} w(s) z(s) \dd s - \rbra*{\int_{0}^{t}w(s) \dd s} \bar z}
		\\ & = \hessian f(\bar z) \rbra*{\bar z - \bar z}
		\\ & = 0 \label{equation:diff-nabla-rhs-first}.
	\end{align}
	Furthermore, the norm of the second term on the right-hand side of \cref{equation:diff-nabla-integral} is evaluated as follows:
	\begin{align}
		 & \phantom{\le\ } \norm{\int_{0}^{t} w(s)
		{ \int_0^1 \rbra*{
					\hessian f((1 - \sigma)\bar z + \sigma z(s))  - \hessian f(\bar z)
				}
				(z (s) - \bar z ) \dd \sigma
				} \dd s}
		\\ & \le \int_{0}^{t} w(s)
		{ \int_0^1
				\norm{\hessian f((1 - \sigma)\bar z + \sigma z(s))-\hessian f(\bar z)}
				\norm{z (s) - \bar z} \dd \sigma
			} \dd s
		\\ & \le \int_{0}^{t} w(s)
		{ \int_0^1
				L_2 \norm{((1 - \sigma)\bar z + \sigma z(s))-\bar z}
				\norm{z (s) - \bar z} \dd \sigma
			} \dd s
		\hspace{18pt}
		\explain{\text{\cref{assumption:L2}}}
		\\ & = \int_{0}^{t} w(s)
		\int_0^1
		\sigma L_2\normsq{z (s) - \bar z} \dd \sigma
		\dd s
		\\ & = \frac{L_2}{2} \int_{0}^{t} w(s)\normsq{z (s) - \bar z } \dd s
		\label{equation:diff-nabla-rhs-second}.
	\end{align}
	By taking norm of both sides of \cref{equation:diff-nabla-integral} and applying \cref{equation:diff-nabla-lhs,equation:diff-nabla-rhs-first,equation:diff-nabla-rhs-second}, we have
	\begin{align}
		\norm{\int_0^{t} w(s) \nabla f(z(s)) \dd s
			- \nabla f(\bar z) } \le \frac{L_2}{2} \int_{0}^{t} w(s)\normsq{z (s) - \bar z } \dd s,
	\end{align}
	which is equivalent to \cref{inequation:lemma-continuous-first-appendix}.

        Next, we derive \cref{inequation:lemma-continuous-second-appendix} from \cref{inequation:lemma-continuous-first-appendix}.
        Since
	\begin{align}
		\normsq{\int_{0}^{t} w(\sigma) z(\sigma) \dd \sigma}
		 & = \inprod{\int_{0}^{t} w(\sigma) z(\sigma) \dd \sigma}{\int_{0}^{t} w(\tau) z(\tau) \dd \tau}
		\\ & = \iint_{{\cint{0}{t}}^2} w(\sigma) w(\tau) \inprod{z(\sigma)}{z(\tau)} \dd \sigma \dd \tau,
		\label{lemma-continuous-sublem1}
	\end{align}
	we have
	\begin{align}
		 & \iint_{0 \le \sigma \le \tau \le t} w(\sigma) w(\tau)\normsq{z(\tau) - z(\sigma)} \dd \sigma \dd \tau
		\\ &= \frac{1}{2} \iint_{{\cint{0}{t}}^2} w(\sigma) w(\tau) \normsq{z(\tau) - z(\sigma)} \dd \sigma \dd \tau
		\\ &=
		\int_{0}^{t} w(\sigma) \normsq{z(\sigma)} \dd \sigma
		- \iint_{{\cint{0}{t}}^2} w(\sigma) w(\tau) \inprod{z(\sigma)}{z(\tau)} \dd \sigma \dd \tau
		\\ &=
		\int_{0}^{t} w(\sigma) \normsq{z(\sigma)} \dd \sigma
		- \normsq{\int_{0}^{t} w(\sigma) z(\sigma) \dd \sigma}
		\quad \explain{\cref{lemma-continuous-sublem1}}
		\\ &=
		\int_{0}^{t} w(\sigma) \normsq{z(\sigma)} \dd \sigma
		- \normsq{\bar z}
		\\ &= \int_{0}^{t} w(\sigma) \normsq{z(\sigma)} \dd \sigma
		- 2 \normsq{\bar z} + \normsq{\bar z}
		\\ &=
		\int_{0}^{t} w(\sigma) \normsq{z(\sigma)} \dd \sigma
		- 2 \inprod{\int_{0}^{t} w(\sigma)z(\sigma)\dd \sigma}{\bar z} + \normsq{\bar z} \int_{0}^{t} w(\sigma)\dd \sigma
		\\ & \phantom{daaaaaaaaaaaaaaaaaaaa} \since{\bar z\coloneqq \int_{0}^{t} w(\sigma) z(\sigma) \dd \sigma \text{ and } \int_{0}^{t} w(\sigma) \dd \sigma = 1}
		\\ &=
		\int_{0}^{t} w(\sigma) \normsq{z(\sigma) - \bar z} \dd \sigma,
	\end{align}
	which is equivalent to \cref{inequation:lemma-continuous-second-appendix}.

	Next we prove inequality \cref{inequation:lemma-continuous-third-appendix}.
	For $0\le \sigma \le \tau \le t$, we have
	\begin{alignat}{2}
		\normsq{z(\tau) - z(\sigma)}
				 & = \normsq{\int_{\sigma}^{\tau} \dot z(s) \dd s}
		\\ & \le \rbra*{{
					\int_{\sigma}^{\tau} \norm{\dot z(s)} \dd s
		}}^2 & \quad                                           & \explain{\text{the triangle inequality}}
		\\ & \le \rbra*{{
					\int_{\sigma}^{\tau} \normsq{\dot z(s)} \dd s
				}} \rbra*{\int_{\sigma}^{\tau} \dd s}
				 & \quad                                           & \explain{\text{the Cauchy--Schwarz inequality}}
		\\ & = (\tau - \sigma) {
				\int_{\sigma}^{\tau} \normsq{\dot z(s)} \dd s
			} \label{inequation:cs-integral}.
	\end{alignat}
	Using this, we also have
	\begin{align}
		\int_{0 \le \sigma \le \tau \le t} w(\sigma)& w(\tau)\normsq{z(\tau) - z(\sigma)} \dd \sigma \dd \tau
		\\ & \le \int_{0 \le \sigma \le \tau \le t} w(\sigma) w(\tau)
		(\tau - \sigma) \int_{\sigma}^{\tau} \normsq{\dot z(s)} \dd s
		\dd \sigma \dd \tau
		\\ & = \int_{0 \le \sigma \le s \le \tau \le t} w(\sigma) w(\tau)
		(\tau - \sigma) {
				\normsq{\dot z(s)} \dd s
			}
		\dd \sigma \dd \tau
		\\ & =
		\int_{0}^{t} \normsq{\dot z(s)}
		\rbra*{{\int_{0}^{s} \dd \sigma \int_{s}^{t} \dd \tau \ w(\sigma) w(\tau)
					(\tau - \sigma) }}
		\dd s,
	\end{align}
	which implies \cref{inequation:lemma-continuous-third-appendix}.
\end{proof}

\cref{lemma:gradient-mean-continuous} bounds the error when the gradient $\nabla f(\bar z)$ at the average solution $\bar z$ is approximated by the average of the gradients $\int_{0}^{t} w(s) \nabla f(z(s)) \dd s$.
Applying this lemma with $(z, w) = (x, w_t)$, where $x$ is the solution to the ODE \cref{equation:ode} and $w_t$ is defined by \cref{eq:def_w}, gives the following bound:
\begin{align}
	\norm{\nabla f(\bar x(t)) - \int_{0}^{t} w_t(s) \nabla f(x(s)) \dd s}
	\le \frac{L_2}{2}
	\int_{0}^t {
	\normsq{\dot x(s)}
	}
	\rbra*{{\int_{0}^s \dd \sigma \int_s^t \dd \tau \ w_t(\sigma) w_t(\tau)
				(\tau - \sigma) }}
	\dd s.
	\label{eq:gradient-mean-continuous2}
\end{align}
Thanks to the definition \cref{eq:def_w} of the weight $w_t$, the average of the gradients simplifies to
\begin{alignat}{2}
	\int_{0}^{t} w_t(s) \nabla f(x(s)) \dd s
	 & = -\int_{0}^{t} w_t(s) \rbra*{
		\ddot{x}(s) + \alpha\dot{x}(s)
	} \dd s
	 & \quad                          & \explain{\cref{equation:ode}}
	\\ &= -\int_{0}^{t} \rbra*{w_t(s)\ddot{x}(s) + \dot w_t(s)\dot{x}(s)} \dd s
	 & \quad                          & \explain{\cref{eq:def_w}}
	\\ &= -\sbra*{
		w_t(s) \dot{x}(s)
	}_{s=0}^{t}
	\\ &= - w_t(t) \dot x(t)
	\\ & = - \frac{\alpha}{1 - e^{- \alpha t}} \dot x(t).
	 & \quad                          & \explain{\cref{eq:def_w}}
	 \label{eq:weighted-average-of-gradient-simple-continuous}
\end{alignat}
Substituting this equation into the left-hand side of \cref{eq:gradient-mean-continuous2} and doing some calculations, we can obtain the following upper bound on $\norm{\nabla f({\bar{x}(t)})}$.
\begin{lemma}
	\label{lemma:gnorm-bar-x-continuous}
	The following holds for all $t > 0$:
	\begin{align}
		\norm{\nabla f({\bar{x}(t)})}
		\le
		\frac{\alpha}{1 - e^{- \alpha t}} \norm{\dot x(t)}
		+
		\frac{L_2}{2}
		\frac{1}{\rbra*{1 - e^{- \alpha t}}^2}
		\int_{0}^{t} \normsq{\dot x(s)}
		e^{- \alpha (t - s)} \rbra*{t - s}
		\dd s.
	\end{align}
\end{lemma}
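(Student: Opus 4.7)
The plan is to combine two facts already derived just before the lemma statement. The first is the bound \cref{eq:gradient-mean-continuous2}, obtained by applying \cref{lemma:gradient-mean-continuous} with $(z,w) = (x, w_t)$, which controls the quantity $\norm{\nabla f(\bar x(t)) - \int_0^t w_t(s)\nabla f(x(s))\dd s}$ by a weighted integral of $\normsq{\dot x(s)}$. The second is the identity \cref{eq:weighted-average-of-gradient-simple-continuous}, obtained via integration by parts using the specific choice \cref{eq:def_w} of $w_t$, which gives $\int_0^t w_t(s)\nabla f(x(s))\dd s = -\frac{\alpha}{1-e^{-\alpha t}}\dot x(t)$. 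Decomposing $\nabla f(\bar x(t))$ as the difference inside the first bound plus the integral on the left-hand side of the second identity, and applying the triangle inequality, immediately produces the first term $\frac{\alpha}{1-e^{-\alpha t}}\norm{\dot x(t)}$ of the target bound.

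What remains is to show that, for every $s \in [0,t]$,
\[
  K(s) \coloneqq \int_0^s \dd\sigma \int_s^t \dd\tau\, w_t(\sigma)\, w_t(\tau)(\tau - \sigma) \le \frac{(t-s)\, e^{-\alpha(t-s)}}{(1-e^{-\alpha t})^2}.
\]
I would do this by explicit evaluation. Substituting $w_t(\sigma) w_t(\tau) = \alpha^2 e^{\alpha(\sigma+\tau)}/(e^{\alpha t}-1)^2$ and splitting $\tau - \sigma$ into two terms, the iterated integrals reduce to elementary integrals of $e^{\alpha u}$ and $u\, e^{\alpha u}$, computed by integration by parts. After simplification, the $\alpha^{-3}$ contributions cancel, and one obtains the exact identity
\[
  K(s) = \frac{(t-s)\, e^{\alpha(t+s)} - t\, e^{\alpha t} + s\, e^{\alpha s}}{(e^{\alpha t}-1)^2}.
\]
The desired inequality then follows from the elementary monotonicity of $r \mapsto r\, e^{\alpha r}$ on $[0,\infty)$, which gives $s\, e^{\alpha s} \le t\, e^{\alpha t}$; dropping the resulting non-positive contribution in the numerator and rewriting $e^{\alpha(t+s)}/(e^{\alpha t}-1)^2 = e^{-\alpha(t-s)}/(1-e^{-\alpha t})^2$ puts the bound in the stated form. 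Substituting this estimate for $K(s)$ back into \cref{eq:gradient-mean-continuous2} completes the proof.

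The main obstacle is the bookkeeping required in the explicit computation of $K(s)$: the exact value involves three terms with different $\alpha^{-k}$ powers that must cancel against one another, so the integration by parts has to be carried out with care. Cruder majorizations such as $\tau-\sigma \le t-\sigma$ or $\tau - \sigma \le t$ do not preserve the sharp factor $(t-s)\, e^{-\alpha(t-s)}$ appearing on the right-hand side of the lemma, so going through the exact evaluation seems unavoidable.
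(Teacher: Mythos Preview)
Your proposal is correct and follows essentially the same route as the paper: apply \cref{lemma:gradient-mean-continuous} with $(z,w)=(x,w_t)$, use the identity \cref{eq:weighted-average-of-gradient-simple-continuous} together with the triangle inequality for the first term, and then bound the inner double integral $K(s)$ by explicit integration by parts. The only cosmetic difference is that the paper drops the non-positive contribution $-(\tau+1/\alpha)$ immediately after the $\sigma$-integration (which makes the $\tau$-integral collapse directly to $e^{\alpha t}(t-s)$), whereas you carry the exact computation through to the closed form for $K(s)$ and discard $-t e^{\alpha t}+s e^{\alpha s}\le 0$ at the end; both yield the identical bound.
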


\begin{proof}
	Applying \cref{lemma:gradient-mean-continuous} with $(z, w) = (x, w_t)$, where $x$ is the solution to the ODE~\cref{equation:ode} and $w_t$ is defined by \cref{eq:def_w}, gives the following bound:
	\begin{align}
		\norm{\nabla f(\bar x(t)) - \int_{0}^{t} w_t(s) \nabla f(x(s)) \dd s}
		\le \frac{L_2}{2}
		\int_{0}^t {
		\normsq{\dot x(s)}
		}
		\rbra*{{\int_{0}^s \dd \sigma \int_s^t \dd \tau \ w_t(\sigma) w_t(\tau)
					(\tau - \sigma) }}
		\dd s.
	\end{align}
	Using the triangle inequality gives
	\begin{align}
		\norm{\nabla f(\bar x(t))}
		& \le
		\norm{\int_{0}^{t} w_t(s) \nabla f(x(s)) \dd s} \\
		& \hspace{10pt} + \frac{L_2}{2}
		\int_{0}^t {
		\normsq{\dot x(s)}
		}
		\rbra*{\int_{0}^s \dd \sigma \int_s^t \dd \tau \ w_t(\sigma) w_t(\tau)
			(\tau - \sigma) }
		\dd s.
	\end{align}
	Now, we will evaluate each term on the right-hand side.
	The first term is evaluated using \cref{eq:weighted-average-of-gradient-simple-continuous} as follows:
	\begin{align}
		\norm{\int_{0}^{t} w_t(s) \nabla f(x(s)) \dd s}
		= \frac{\alpha}{1 - e^{- \alpha t}} \norm{\dot x(t)}.
	\end{align}
	The second term is evaluated as follows:
	\begin{align}
		\int_{0}^s \dd \sigma \int_s^t \dd \tau \ w_t(\sigma) w_t(\tau) (\tau - \sigma)
		 & =
		\rbra*{\frac{\alpha}{e^{\alpha t} - 1}}^2
		{\int_{s}^{t} e^{\alpha \tau} \rbra*{\int_{0}^{s} e^{\alpha \sigma} (\tau-\sigma) \dd \sigma} \dd \tau }
		\\ & =
		\frac{\alpha}{(e^{\alpha t} - 1)^2} \int_{s}^{t}
		e^{\alpha \tau} \rbra*{ \sbra*{
				e^{\alpha \sigma} \rbra*{
					\tau - \sigma + \frac{1}{\alpha}
				}
			}_{\sigma=0}^s}
		\dd \tau
		\\ &= \frac{\alpha}{(e^{\alpha t} - 1)^2} \int_{s}^{t} e^{\alpha \tau} \rbra*{
			e^{\alpha s} \rbra*{
				\tau - s + \frac{1}{\alpha}
			} - \rbra*{
				\tau + \frac{1}{\alpha}
			}
		} \dd \tau
		\\ &\le \frac{\alpha}{(e^{\alpha t} - 1)^2} \int_{s}^{t} e^{\alpha \tau}
		e^{\alpha s} \rbra*{
			\tau - s + \frac{1}{\alpha}
		} \dd \tau
		\\ &= \frac{e^{\alpha s}}{(e^{\alpha t} - 1)^2} \sbra*{
			e^{\alpha \tau} \rbra*{
				\tau - s
			}
		}_{\tau=s}^t
		\\ &= \frac{e^{\alpha (s + t)}}{\rbra*{e^{\alpha t}-1}^2}
		\rbra*{t - s}
		\\ &= \frac{1}{\rbra*{1 - e^{- \alpha t}}^2}
		e^{- \alpha (t - s)} \rbra*{t - s},
	\end{align}
	which completes the proof.
\end{proof}


The following lemma follows from the relationship between mechanical energy and the work done by friction in the ODE \cref{equation:ode}.
\begin{lemma}
	\label{lemma:normsq_xdot_upperbound}
	The following holds for all $t \ge 0$:
	\begin{align}
		\int_{0}^{t} \normsq{\dot x(s)} \dd s
		\le
		\frac{\Delta_f}{\alpha},
		\label{equation:function-defcrease-continuous}
	\end{align}
        where $\Delta_f$ is defined in \Cref{theorem:continuous-convergence-rate}.
\end{lemma}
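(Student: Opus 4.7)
The plan is to prove the bound via a standard energy dissipation argument for the heavy-ball ODE. Define the mechanical energy
\begin{equation}
  E(t) \coloneqq \tfrac{1}{2}\normsq{\dot x(t)} + f(x(t)),
\end{equation}
which is the sum of kinetic and potential energy of the particle. The initial condition $\dot x(0) = 0$ gives $E(0) = f(x_0)$.

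Next I would differentiate $E$ along the trajectory. Using the chain rule and the ODE \cref{equation:ode},
\begin{equation}
  \dot E(t)
  = \inprod{\ddot x(t)}{\dot x(t)} + \inprod{\nabla f(x(t))}{\dot x(t)}
  = \inprod{-\alpha \dot x(t) - \nabla f(x(t))}{\dot x(t)} + \inprod{\nabla f(x(t))}{\dot x(t)}
  = -\alpha \normsq{\dot x(t)},
\end{equation}
so the friction term is precisely what dissipates the energy.

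Integrating from $0$ to $t$ yields
\begin{equation}
  \alpha \int_0^t \normsq{\dot x(s)} \dd s = E(0) - E(t) = f(x_0) - \tfrac{1}{2}\normsq{\dot x(t)} - f(x(t)) \le f(x_0) - \inf_{x\in\euclid} f(x) = \Delta_f,
\end{equation}
where the inequality uses $\tfrac{1}{2}\normsq{\dot x(t)} \ge 0$ and the lower bound on $f$. Dividing by $\alpha > 0$ gives the claim.

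There is no real obstacle here: the argument is a direct computation, and the only structural ingredients are the ODE itself, the initial condition $\dot x(0)=0$, and the lower boundedness of $f$ (guaranteed in the problem setup). The Lipschitz assumptions are not needed for this lemma beyond ensuring existence and uniqueness of $x(t)$, which was already established.
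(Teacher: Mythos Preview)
Your proof is correct and essentially identical to the paper's own argument: both define the mechanical energy $E(t) = \tfrac{1}{2}\normsq{\dot x(t)} + f(x(t))$ (the paper calls it $\Phi$), compute $\dot E(t) = -\alpha\normsq{\dot x(t)}$ from the ODE, and integrate to bound $\alpha\int_0^t \normsq{\dot x(s)}\dd s$ by $\Delta_f$.
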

\begin{proof}
	Let
	\begin{align}
		\Phi(t) \coloneqq \frac{1}{2} \normsq{\dot x(t)} + f(x(t)).
		\label{equation:Phi}
	\end{align}
	Differentiating both sides of \cref{equation:Phi} by $t$, we obtain
	\begin{align}
		\dv{t} \Phi(t) & = \inprod{\dot x(t)}{\ddot x(t)} + \inprod{\nabla f(x(t))}{\dot x(t)}
		\\ &= \inprod{\dot x(t)}{\ddot x(t) + \nabla f(x(t))}
		\\ & = \inprod{\dot x(t)}{-\alpha \dot x(t)}
		\\ & = - \alpha \normsq{\dot x(t)}.
	\end{align}
	Integrating this equation gives
	\begin{align}
		\alpha
		\int_{0}^{t} \normsq{\dot x(s)} \dd s
		 & =
		\Phi(0) - \Phi(t) \\
		 & =
		f(x_0) - f(x(t)) - \frac{1}{2} \normsq{\dot x(t)}
		\label{eq:function-decrease-continuous}
		\\
		 & \le \Delta_f,
	\end{align}
	which completes the proof.
\end{proof}

Intuitively, \cref{lemma:gnorm-bar-x-continuous} implies that $\norm{\nabla f(\bar x(t))}$ is small if $\norm{\dot x(s)}$ is small for $s \in [0, t]$, whereas \cref{lemma:normsq_xdot_upperbound} states that the integral of $\normsq{\dot x(t)}$ is bounded by $\Delta_f/\alpha$.
By combining these two lemmas, we evaluate $\norm{\nabla f(\bar x(t))}$ and establish \cref{theorem:continuous-convergence-rate}.

\begin{proof}[Proof of \cref{theorem:continuous-convergence-rate}]
	Multiplying the inequality in \cref{lemma:gnorm-bar-x-continuous} by $(1 - e^{- \alpha t})^2$ yields
	\begin{align}
		\rbra*{1 - e^{- \alpha t}}^2
		\norm{\nabla f({\bar{x}(t)})}
		 & \le
		\alpha
		\rbra*{1 - e^{- \alpha t}}
		\norm{
			\dot x(t)
		} +
		\frac{L_2}{2}
		\int_{0}^{t} \normsq{\dot x(s)} e^{- \alpha (t - s)} \rbra*{t - s}
		\dd s  \\
		 & \le
		\alpha
		\norm{
			\dot x(t)
		} +
		\frac{L_2}{2}
		\int_{0}^{t} \normsq{\dot x(s)} e^{- \alpha (t - s)} \rbra*{t - s}
		\dd s.
		\label{eq:gnorm-bar-x-continuous_multiplied}
	\end{align}
	Since
	\begin{align}
		\int_0^T \rbra*{1 - e^{- \alpha t}}^2 \dd t
		=
		\sbra*{t + \frac{4 - e^{- \alpha t}}{2 \alpha e^{\alpha t}}}_{t=0}^T
		=
		T - \frac{3}{2 \alpha} + \frac{4 - e^{- \alpha T}}{2 \alpha e^{\alpha T}}
		\geq
		T - \frac{3}{2 \alpha},
	\end{align}
	integrating \cref{eq:gnorm-bar-x-continuous_multiplied} from $t = 0$ to $T$ yields
	\begin{align}
		\rbra*{T - \frac{3}{2 \alpha}}
		\min_{0 \le t \le T} \norm{\nabla f({\bar{x}(t)})}
		\leq
		\alpha
		\int_0^T \norm{\dot x(t)} \dd t
		+
		\frac{L_2}{2}
		\int_0^T
		\int_{0}^{t} \normsq{\dot x(s)} e^{- \alpha (t - s)} \rbra*{t - s}
		\dd s
		\dd t.
		\label{inequation:norm-upper-bound-with-two-integrals}
	\end{align}
	The first term on the right-hand side of \eqref{inequation:norm-upper-bound-with-two-integrals} is evaluated by using Cauchy--Schwarz inequality and \cref{lemma:normsq_xdot_upperbound} as follows:
	\begin{align}
		\alpha \int_{0}^{T} \norm{\dot x(t)} \dd t
		 & \le \alpha \sqrt{
			\rbra*{\int_{0}^{T} \dd t}
			\rbra*{\int_{0}^{T} \normsq{\dot x(t)} \dd t}
		}
		\\ & \le \sqrt{T\alpha {\Delta_f}}.
		\label{inequation:first-term-upper-bound}
	\end{align}
	The second term on the right-hand side of \cref{inequation:norm-upper-bound-with-two-integrals} is bounded as
	\begin{align}
		\int_0^T &
		\int_{0}^{t} \normsq{\dot x(s)} e^{- \alpha (t - s)} \rbra*{t - s}
		\dd s
		\dd t.
		\\ & = \int_{0}^{T} \rbra*{\int_{s}^{T}
			e^{-\alpha\rbra*{t - s}} \rbra*{t - s} \dd t} \normsq{\dot x(s)} \dd s
		\\ & = \int_{0}^{T} \rbra*{\int_{0}^{T-s} t e^{-\alpha t} \dd t} \normsq{\dot x(s)} \dd s
		\\ & = \int_{0}^{T}
		\rbra*{\sbra*{\rbra*{-\frac{t}{\alpha} - \frac{1}{\alpha^2}} e^{-\alpha t}}_{t=0}^{T-s}} \normsq{\dot x(s)} \dd s
		\\ & = \int_{0}^{T}
		\rbra*{\rbra*{-\frac{T-s}{\alpha} - \frac{1}{\alpha^2}} e^{-\alpha\rbra*{T-s}} - \rbra*{-\frac{1}{\alpha^2}}}  \normsq{\dot x(s)} \dd s
		\\ & = \int_{0}^{T}
		\rbra*{\frac{1}{\alpha^2} - \rbra*{\frac{T-s}{\alpha} + \frac{1}{\alpha^2}} e^{-\alpha\rbra*{T-s}}}  \normsq{\dot x(s)} \dd s
		\\ & \le \frac{1}{\alpha^2} \int_{0}^{T} \normsq{\dot x(s)} \dd s
		\\ & \hspace{60pt} \since{\rbra*{\frac{T-s}{\alpha} + \frac{1}{\alpha^2}} e^{-\alpha\rbra*{T-s}}\ge 0 \text{ on } s \in \cint{0}{T}}
		\\ & \le \frac{1}{\alpha^2} \frac{{\Delta_f}}{\alpha}
		\hspace{19pt}	\explain{\text{\cref{lemma:normsq_xdot_upperbound}}}
		\\ & = \frac{{\Delta_f}}{\alpha^3}.
		\label{inequation:second-term-upper-bound}
	\end{align}
	Applying \cref{inequation:first-term-upper-bound,inequation:second-term-upper-bound} to \cref{inequation:norm-upper-bound-with-two-integrals}, we have
	\begin{align}
		\rbra*{T - \frac{3}{2 \alpha}} \min_{0 \le t \le T} \norm{\nabla f({\bar{x}(t)})}
		& \le \sqrt{\alpha T{\Delta_f}} + \frac{L_2{\Delta_f}}{2\alpha^3}
		\label{eq:final_evaluation}
		\\ &  = \sqrt{\rbra*{\rbra*{3L_2}^{\frac{2}{7}} {\Delta_f}^{\frac{1}{7}} T^{-\frac{1}{7}}	} T{\Delta_f}} + \frac{L_2{\Delta_f}}{2\rbra*{\rbra*{3L_2}^{\frac{2}{7}} {\Delta_f}^{\frac{1}{7}} T^{-\frac{1}{7}}}^3}
		 \quad \explain{\cref{eq:def_alpha}}
		\\ &  = \frac{7 \cdot 3^{\frac{1}{7}}}{6} L_2^{\frac{1}{7}} {\Delta_f}^{\frac{4}{7}} T^{\frac{3}{7}}
		.
	\end{align}
	Moreover, using
	\begin{align}
		\rbra*{T - \frac{3}{2 \alpha}}^{-1}
		 & = T^{-1} \rbra*{1 - \frac{3}{2 \alpha T}}^{-1}
		\\ & = T^{-1}\rbra*{1 + O\rbra*{\alpha^{-1} T^{-1}}}
		\\ & = T^{-1} + O\rbra*{T^{-\frac{13}{7}}} \quad (T\rightarrow \infty) \quad \since{\alpha = \Theta\rbra*{T^{-\frac{1}{7}}}},
	\end{align}
	we have
	\begin{align}
		\min_{0 \le t \le T} \norm{\nabla f({\bar{x}(t)})}
		 & \le
		\rbra*{T - \frac{3}{2 \alpha}}^{-1}
		\frac{7 \cdot 3^{\frac{1}{7}}}{6} L_2^{\frac{1}{7}} {\Delta_f}^{\frac{4}{7}} T^{\frac{3}{7}}
		\\ & =
		\frac{7 \cdot 3^{\frac{1}{7}}}{6} L_2^{\frac{1}{7}} {\Delta_f}^{\frac{4}{7}} T^{\frac{3}{7}}
		\rbra*{T^{-1} + O\rbra*{T^{-\frac{13}{7}}}}
		\\ & =
		\frac{7 \cdot 3^{\frac{1}{7}}}{6} L_2^{\frac{1}{7}} {\Delta_f}^{\frac{4}{7}} T^{-\frac{4}{7}}
		+ O\rbra*{T^{-\frac{10}{7}}} \quad (T\rightarrow \infty).
		\label{inequation:min-norm-upper-bound-continuous}
	\end{align}
\end{proof}
The parameter $\alpha$ in \cref{eq:def_alpha} is chosen to minimize the right-hand side of \cref{eq:final_evaluation} in the above proof.

\section{Discussion and Future Work}
In this paper, we have shown that the convergence rate of the gradient norm at the average solution \cref{equation:bar-x-t-def} of ODE \cref{equation:ode} is $O(T^{-4/7})$, assuming the Lipschitz continuity of the gradient and Hessian of $f$.
This result directly implies that we have an $\epsilon$-stationary point of $f$ in $O(\epsilon^{-7/4})$ time.
Note that the parameter $\alpha$ in ODE~\eqref{equation:ode} depends on the final time $T$.
One direction for future work is to establish the same convergence rate of the gradient norm $\gnorm{\cdot}$ at the solution $x(t)$ of ODE~\eqref{equation:ode}, rather than at its time average $\bar{x}(t)$.

Furthermore, a natural next step is to discretize ODE~\eqref{equation:ode} and develop a first-order method that finds an $\epsilon$-stationary point of $f$ in $O(\epsilon^{-7/4})$ time.
However, care must be taken in discretizing the ODE.
\citet{goujaud2023provable} showed that, in the heavy-ball method
\begin{align}
	x_{k+1} = x_k - \gamma \nabla f(x_k) + \beta (x_k - x_{k-1}),
\end{align}
when the parameter $\beta$ is chosen close to $1$, the step size $\gamma$ must be taken very small to ensure the convergence of $(x_k)$ for any function $f$ with Lipschitz continuous gradient and Hessian.
At the same time, a naive discretization of ODE~\eqref{equation:ode} with our choice~\eqref{eq:def_alpha} of $\alpha$ yields a heavy-ball method with $\beta$ close to $1$.
This suggests that, to achieve a favorable convergence rate, it is likely to be necessary to discretize the ODE in a different way---potentially leading to a first-order method other than the heavy-ball method, such as Nesterov's accelerated gradient.

\section*{Acknowledgements}
This work was partially supported by JSPS KAKENHI (23H03351 and 24K23853) and JST CREST (JPMJCR24Q2).

\bibliography{ref.bib}

\end{document}